\theoremstyle{plain}
\newtheorem{theorem}{Theorem}
\newtheorem{corollary}{Corollary}
\newtheorem{property}{Property}
\newtheorem{newproperty}{Property}
\theoremstyle{definition}
\newtheorem{definition}{Definition}
\theoremstyle{remark}
\begin{document}

\title[Fractal analysis of Guthrie-Nymann's set] {Fractal analysis of Guthrie-Nymann's set and its generalisations}
\author[M. Pratsiovytyi, D. Karvatskyi]{{M. Pratsiovytyi, D. Karvatskyi}
\centerline{(Institute of Mathematics of NAS of Ukraine)}}
\keywords{The set of subsums, achievement set, fractal set, Cantorval, Hausdorff dimension, Guthrie-Nymann's series}


\subjclass[2000]{40A05, 28A80, 11B05}
\begin{abstract}
In this paper, we study the fractal properties of the boundary of the Cantorval connected with Guthrie-Nymann's series. In particular, we prove that such a Cantorval can be represented as a disjoint union of open intervals and a Cantor set having zero Lebesgue measure and a fractional Hausdorff dimension. Moreover, we extend the result to a countable family of Cantorvals with a similar structure.
\end{abstract}
\maketitle

\begin{section}{History of the study of Cantorvals}

In 1994, mathematicians P. Mendes and F. Oliveira studied the topological structure of the arithmetical sum  
$$C_1 \oplus C_2=\{ x_1 + x_2 : x_1 \in C_1, x_2 \in C_2\}$$
of two Cantor sets $C_1$ and $C_2$, both with Lebesgue measure zero \cite{Mendes} (see also \cite{Anisca}). As a product of such a sum, there occurs a perfect set, which is made up of points, intervals and gaps, each one being bothsided accumulated by all of the three. In simple words, this set is a strange combination of a nowhere dense set (Cantor set) and intervals. Because of its topological structure, the authors called this set \emph{Cantorval}. This set can be formally defined as a nonempty compact real subspace, which is the closure of its interior and the endpoints of any nontrivial component of this set are accumulation points of trivial components.

Analogously, Cantorval is one of the potential forms of the algebraic difference of a Cantor set $A$ defined by 
$$A \ominus A = \{ x_1 - x_2 : x_1, x_2 \in A \}$$
or the algebraic difference for two Cantor sets $A$ and $B$ in a more general case. For instance, some sufficient conditions for the set $A$ to have a Cantorval as the set of difference $A \ominus A$ were found in \cite{FN}.

Cantorvals are also one of the three possible topological types of the set of subsums for a convergent positive series $\sum a_n$, i.e., the set
\begin{equation*}
 \label{incomplit sum}
E(a_n)=\left\{\sum^{\infty }_{n=1}{{\varepsilon }_na_n}: ~ ~ ~ ({\varepsilon }_n) \in \{0, 1\}^{N} \right\}.
\end{equation*} 
It is well known that $E(a_n)$ for a convergent positive series is a symmetrical perfect set such that $E(a_n) \subseteq [0, r]$, where $r=\sum_{n=1}^{\infty} a_n$. In particular, the papers \cite{Guthrie 1988} and \cite{Nymann} give us the full topological classification of $E(a_n)$.
\begin{theorem}
\label{GN-Theorem}
The set of subsums $E(a_n)$ for a convergent positive series is one of the following three types: 
\begin{enumerate}
\item a finite union of closed intervals;
\item homeomorphic to the Cantor set (or shortly Cantor set);
\item M-Cantorval (or shortly Cantorval).
\end{enumerate}
\end{theorem}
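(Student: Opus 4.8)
The plan is to deduce the trichotomy from the following chain of reductions applied to $E:=E(a_n)$: it is a nonempty compact perfect subset of $\mathbb R$; being compact and perfect it is either nowhere dense — and then of type (2) — or it has nonempty interior, in which case it is of type (1) if it is a finite union of closed intervals and otherwise one must show it is an M-Cantorval. As preliminaries: since the series converges absolutely, $E$ does not depend on the order of the $a_n$, so I may assume $a_1\ge a_2\ge\cdots$. The map $(\varepsilon_n)\mapsto\sum\varepsilon_n a_n$ is continuous on the compact space $\{0,1\}^{\mathbb N}$, hence $E$ is compact; it is symmetric about $r/2$ (replace $\varepsilon_n$ by $1-\varepsilon_n$); and it is perfect, because any $x=\sum\varepsilon_n a_n$ is a limit of the points obtained by switching one far-out digit of $x$ — some $\varepsilon_n=0\mapsto1$ if infinitely many digits vanish, otherwise some $\varepsilon_n=1\mapsto0$. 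Writing $r_n=\sum_{k>n}a_k$ and $E^{(N)}=E((a_n)_{n>N})$, I record two forms of the self-similarity: $E=F_N\oplus E^{(N)}$ with $F_N=E(a_1,\dots,a_N)$ finite, and $E=\bigcap_{m\ge1}A_m$ where $A_m$ is the union of the $2^m$ closed cells $[\,\sum_{n\le m}\varepsilon_n a_n,\ \sum_{n\le m}\varepsilon_n a_n+r_m\,]$, each of length $r_m\to0$. From the first form: $E$ has empty interior iff each $E^{(N)}$ does, and $E$ is a finite union of closed intervals iff each $E^{(N)}$ is.

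For the first two types little is needed. If $E$ is nowhere dense it is totally disconnected — a closed nowhere dense subset of $\mathbb R$ contains no nondegenerate connected set — so by Brouwer's topological characterization of the Cantor set, $E$ is of type (2). If $E$ has nonempty interior and is a finite union of closed intervals, it is of type (1). So the entire content of the theorem is the remaining case: a subsum set with nonempty interior that is \emph{not} a finite union of closed intervals satisfies the two defining conditions of an M-Cantorval, namely (a) $E=\overline{\mathrm{int}\,E}$, and (b) the endpoints of every nontrivial component of $E$ are accumulation points of trivial (one-point) components. Two facts, proved by the kind of elementary estimates above, are available in this case as tools. First, $a_n\le r_n$ infinitely often: if instead $a_n>r_n$ for all $n\ge N$, then for $m\ge N$ any two distinct subsums $\sum_{k=N}^{m}\varepsilon_k a_k$ differ by more than $r_m$ (if $k_0$ is their first index of disagreement, the difference is at least $a_{k_0}-(r_{k_0}-r_m)>r_m$), so $E^{(N-1)}$ is covered by pairwise disjoint cells of length $r_m\to0$ and is nowhere dense — hence so is $E=F_{N-1}\oplus E^{(N-1)}$. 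Second, $a_n>r_n$ infinitely often: by Kakeya's lemma, according to which for a nonincreasing $(b_n)$ one has $E((b_n))=\bigl[0,\sum b_n\bigr]$ iff $b_n\le\sum_{k>n}b_k$ for every $n$, the hypothesis $a_n\le r_n$ for all $n\ge N$ would give $E^{(N)}=[0,r_N]$ and hence $E=F_N\oplus[0,r_N]$ a finite union of closed intervals.

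Condition (a) is easy. Since $\mathrm{int}\,E\ne\emptyset$, also $\mathrm{int}\,E^{(N)}\ne\emptyset$ for every $N$; fix an interval $J\subseteq E^{(N)}$. Given $x=\sum\varepsilon_n a_n\in E$ and $\delta>0$, choose $N$ with $r_N<\delta$ and set $s=\sum_{n\le N}\varepsilon_n a_n$; then $s+J\subseteq s+E^{(N)}\subseteq E$ is a genuine subinterval of $E$, and since $E^{(N)}\subseteq[0,r_N]$ all of $s+E^{(N)}$ — in particular $s+J$ — lies within $\delta$ of $x$. So interior points are dense in $E$; the inclusion $\overline{\mathrm{int}\,E}\subseteq E$ is trivial.

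Condition (b) is the main obstacle. Let $[\alpha,\beta]$ be a nontrivial component of $E$; I want $\alpha$ (and, by symmetry, $\beta$) to be an accumulation point of trivial components, and the idea is to transport the gap structure of a tail $E^{(N)}$ into a neighbourhood of $\alpha$. For large $N$, near $\alpha$ the set $E$ is the union of those translates $s_\omega+E^{(N)}$ ($\omega\in\{0,1\}^N$, $s_\omega=\sum_{n\le N}\omega_n a_n$) whose cell $[s_\omega,s_\omega+r_N]$ meets the neighbourhood; using $0=\min E^{(N)}$ one checks — beginning with the case of a clean gap immediately to the left of $\alpha$ — that necessarily $\alpha=s_\omega$ for a suitable $\omega$, so that $\alpha+E^{(N)}\subseteq E$ and $E$ just to the right of $\alpha$ mirrors $E^{(N)}$ just to the right of $0$. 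Since $E$ is not a finite union of closed intervals, neither is any $E^{(N)}$; combining this with the recursion $E^{(N)}=E^{(N+1)}\cup(a_{N+1}+E^{(N+1)})$ and with ``$a_n>r_n$ infinitely often'' should produce complementary gaps of $E^{(N)}$ accumulating at $0$ and hence, pulled back through the finitely many levels and translated, gaps of $E$ accumulating at $\alpha$, with the points of $E$ separating consecutive such gaps being trivial components. The delicate point — and the real work — is to make this last step rigorous: to show that the boundary set $E^{(N)}\setminus\mathrm{int}\,E^{(N)}$ is genuinely non-degenerate near its endpoints, i.e.\ that arbitrarily small intervals of $E$ are always separated by points of $E$ that lie in no interval at all. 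This is precisely the fine Kakeya-type bookkeeping of which gaps created at the ``bad'' indices ($a_n>r_n$) survive and how they interleave with the merging forced at the ``good'' indices ($a_n\le r_n$), and it is the step which the arguments of \cite{Guthrie 1988,Nymann} had to carry out with care. Granting it, $E$ satisfies (a) and (b), hence is an M-Cantorval, and the trichotomy is complete.
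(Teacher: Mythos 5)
First, note that the paper itself does not prove this theorem: it is quoted as the known classification of Guthrie--Nymann and Nymann--S\'aenz, with references to \cite{Guthrie 1988} and \cite{Nymann}. So your proposal has to stand on its own, and it does not: it contains a genuine, self-acknowledged gap at exactly the point that constitutes the real content of the theorem. Your preliminary reductions are fine (compactness, perfectness, symmetry, the decomposition $E=F_N\oplus E^{(N)}$, the nowhere dense case via Brouwer's characterization, Kakeya's criterion showing that ``$a_n\le r_n$ eventually'' gives a finite union of intervals and that ``$a_n>r_n$ eventually'' gives a Cantor set), and your verification of condition (a), density of the interior, is correct. But the trichotomy is only proved once one shows that in the remaining mixed case the set satisfies condition (b) as well --- that every endpoint of a nontrivial component is accumulated by \emph{trivial} components --- and this you do not prove: the argument dissolves into ``should produce complementary gaps'' and ``granting it''. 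This is not a routine detail one may wave through; it is precisely the step where the original argument of \cite{Guthrie 1988} was incomplete and which \cite{Nymann} was written to repair, so a proof that defers it has not proved the theorem.

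Two concrete places where the sketch would need real work: (i) your claim that a left endpoint $\alpha$ of a nontrivial component must equal some partial sum $s_\omega$ is argued only ``beginning with the case of a clean gap immediately to the left of $\alpha$'', but no such gap need exist --- in a Cantorval the endpoint of a component is typically accumulated from outside by further components and gaps, and $\alpha\in s_\omega+E^{(N)}$ only gives $\alpha=s_\omega+t$ with $t\in E^{(N)}$, not $t=0$; one must also control the finitely many overlapping translates $s_\omega+E^{(N)}$ near $\alpha$. (ii) Even if you produce gaps of $E$ accumulating at $\alpha$, condition (b) requires one-point components accumulating there; a priori the pieces of $E$ squeezed between consecutive gaps could all be nondegenerate intervals, so you must show that arbitrarily close to $\alpha$ there are points of $E$ belonging to no interval of $E$. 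Producing such loose points from the interplay of the indices with $a_n>r_n$ and those with $a_n\le r_n$ is the delicate combinatorial bookkeeping carried out in \cite{Guthrie 1988} and \cite{Nymann}; without it your argument establishes only that $E$ is the closure of its interior, which does not yet exclude topological types other than the three listed.
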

It was also proven that every Cantorval is homeomorphic to the set $T^*$, defined by 

$$T^* \equiv C \cup \bigcup_{n=1}^{\infty} G_{2n-1} \equiv [0, 1] \setminus \bigcup_{n=1}^{\infty} G_{2n},$$
where $C$ is the Cantor ternary set, $G_n$ is the union of the $2^{n-1}$ open middle thirds, which are removed from $[0, 1]$ at the $n$-th step in the construction of $C$.

A large number of articles are devoted to the search for the necessary and sufficient conditions for the set of subsums to be a Cantorval. Despite essential progress for some series, the problem is quite difficult in general frameworks. The most significant results in that direction were obtained in \cite{Bartoszewicz} and \cite{Ferdinands} for multigeometric series  
$$k_1+k_2+\dots+k_m+k_1q+\dots+k_mq+\dots k_1q^i+\dots+k_mq^i+\dots,$$
($k_1, k_2, \dots, k_m$ are fixed positive scalars, $q \in (0, 1)$) and its generalisations \cite{KMV}, \cite{PKC}.
Some conditions for the set of subsums of a non-multigeometric series were found in \cite{Vinishin}. 


Finally, Cantorval can be an attractor for iterated function systems (see \cite{Banakh} and \cite{Banakiewicz}), which doesn't satisfy the open set condition as have significant overlaps. 
Thus, we draw the conclusion that Cantorvals naturally appear in various branches of mathematics, such as mathematical analysis, dynamical systems, number theory, and probability theory. In this context, the interest in this object is quite high.

In the next section, we shall study one of the most simple and, at the same time, beautiful and well-known examples of Cantorval.

\end{section}

\begin{section}{Metric and fractal properties of Guthrie-Nymann's set}

In \cite{Guthrie 1988}, John Guthrie and John Nymann considered the set 
$$X=\left\{x \mid x=\sum_{n=1}^{\infty} \frac{\alpha_n}{4^n}, \alpha_n \in \{0, 2, 3, 5 \} \right\},$$
of subsums for the series
\begin{equation}
\label{GNS}
\frac{3}{4}+\frac{2}{4}+\frac{3}{4^2}+\frac{2}{4^2}+\frac{3}{4^3}+\frac{2}{4^3}+\dots+\frac{3}{4^n}+\frac{2}{4^n}+\dots,
\end{equation}
which will further be called \emph{Guthrie-Nymann's series}. Such a set contains the interval $\left[ 2/3, 1 \right]$, but it is not a finite union of closed intervals; it combines the properties of a nowhere dense set and an infinite union of intervals. Some modifications of the set $X$ were also studied in \cite{PKS}. 


Following the nomenclature in \cite{Mendes} and \cite{Nymann}, we sketch some basic topological properties of $X$ (any arbitrary perfect set). Connectivity components of $X \subset R$ are either closed intervals or singletons. Intervals that are connectivity components of the closed set $X$ will be called \emph{X-intervals}, while one-point connectivity components of $X$ will be called \emph{loose points of} $X$.
Bounded open intervals that are connectivity components of the complement $X'=R \setminus X$ will be called \emph{X-gaps}. Next, we denote by $X_I$ the interior of the set $X$, and by $X_C$ we denote the boundary of $X$, hence $X=X_I \cup X_C$. One can observe that $X_I$ consists of open intervals that are interiors for $X$-intervals. Meanwhile, the boundary $X_C$ is the union of all loose points and endpoints of $X$-intervals. Corresponding between intervals and gaps for an arbitrary Cantorval was also described in \cite{Prus}. 

The paper \cite{Bielas} describes the topological and metric properties of the set $X$. In particular, it was proved that $X$ contains closed intervals having some special form and calculated the Lebesgue measure for the set $X_I$. To indicate all $X$-intervals, the authors used the notion about the center of distances. In particular, they exhibited the density of the set $K_n$, defined by
$$K_n=\left[\frac{2}{3}, 1\right] \bigcap \left\{ \sum_{i=1}^{n} \frac{x_i}{4^i} : \forall_{i} ~ x_i \in \{ 0, 2, 3, 5\} \right\},$$
in the interval $\left[ \frac{2}{3}, 1\right]$. The computation of the Lebesgue measure is based on the following important structural features of $X$:

\begin{property}
\label{P_1}
The interval $\left[\frac{2}{3}, 1\right]$ is included in the Cantorval $X$.
\end{property}

\begin{property}
\label{P_2}
The subset $X \setminus [\frac{2}{3}, 1] \subset X$ is the union of pairwise disjoint affine copies of $X$. In particular, this union includes two isometric copies of $C_n=\frac{1}{4^n} \cdot X$, for every $n>0$.
\end{property}

\begin{property}
\label{P_3}
The subset $X \setminus \left((\frac{2}{3}, 1) \bigcup (\frac{1}{6}, \frac{1}{4}) \bigcup (\frac{17}{12}, \frac{3}{2})\right) \subset X$ is the union of six pairwise disjoint affine copies $D=[0, \frac{1}{6}] \bigcap X$.
\end{property}



As a consequence of the above properties, we get the main corollary of the paper \cite{Bielas}.

\begin{corollary}
The Cantorval $X \subset \left[ 0, \frac{5}{3} \right]$ has Lebesgue measure 1.
\end{corollary}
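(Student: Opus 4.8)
The plan is to reduce the computation of $\lambda(X)$ to a single linear equation by exploiting two elementary features of $X$ — its exact self-similarity under the contraction $x\mapsto x/4$ and its reflection symmetry about the midpoint $\tfrac56$ of $[0,\tfrac53]$ — with Property~\ref{P_1} supplying the only non-trivial ingredient, namely $[\tfrac23,1]\subseteq X$. Note first that $X\subseteq[0,\tfrac53]$ because $\sum_{n\ge1}\tfrac{5}{4^n}=\tfrac53$, and that $X$ is Lebesgue measurable, being compact.

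I would begin from the digit decomposition read off the definition of $X$ as a set of subsums,
$$X=\bigcup_{\alpha\in\{0,2,3,5\}}\Bigl(\tfrac{\alpha}{4}+\tfrac14 X\Bigr),$$
whose four summands are affine copies of $X$ of diameter $\tfrac14\cdot\tfrac53=\tfrac{5}{12}$, contained respectively in $[0,\tfrac{5}{12}]$, $[\tfrac12,\tfrac{11}{12}]$, $[\tfrac34,\tfrac76]$, $[\tfrac54,\tfrac53]$. Since $\tfrac34>\tfrac23$, the last two contribute nothing to $X\cap[0,\tfrac23]$; since $\tfrac{5}{12}<\tfrac23$, the first copy lies wholly inside $[0,\tfrac23]$; and $\bigl(\tfrac12+\tfrac14 X\bigr)\cap[0,\tfrac23]=\tfrac12+\tfrac14\bigl(X\cap[0,\tfrac23]\bigr)$ because $X\subseteq[0,\infty)$. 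As $\tfrac{5}{12}<\tfrac12$, these two surviving pieces are disjoint, so with $u:=\lambda\bigl(X\cap[0,\tfrac23]\bigr)$ we obtain the recursion $u=\tfrac14\lambda(X)+\tfrac14 u$, that is $u=\tfrac13\lambda(X)$.

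Next I would invoke the symmetry of the digit set $\{0,2,3,5\}$ about $\tfrac52$: it makes $X$ invariant under $x\mapsto\tfrac53-x$, and this reflection carries $[0,\tfrac23]$ onto $[1,\tfrac53]$, so $\lambda\bigl(X\cap[1,\tfrac53]\bigr)=u$ as well. Combining this with Property~\ref{P_1}, which gives $\lambda\bigl(X\cap[\tfrac23,1]\bigr)=\tfrac13$, and with the essentially disjoint partition $[0,\tfrac53]=[0,\tfrac23]\cup[\tfrac23,1]\cup[1,\tfrac53]$, we get $\lambda(X)=u+\tfrac13+u=\tfrac23\lambda(X)+\tfrac13$, hence $\lambda(X)=1$ (and in particular $X\ne[0,\tfrac53]$, so it is a genuine Cantorval).

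The step I expect to require the most care is the bookkeeping in the second paragraph: one must check that the four digit-copies do not straddle the point $\tfrac23$ in an uncontrolled way and that the two pieces whose measures are being summed are truly disjoint, so that the recursion for $u$ holds without correction terms. This disjointness is precisely the kind of information recorded by Properties~\ref{P_2}--\ref{P_3}, and one could equally well organise the entire proof around them: split $\lambda(X)=\tfrac13+\lambda\bigl(X\setminus[\tfrac23,1]\bigr)$ using Property~\ref{P_1}, then sum the measures of the affine copies of $X$ listed in Property~\ref{P_2} (with Property~\ref{P_3} accounting for the remaining part), arriving at the same linear relation for $\lambda(X)$.
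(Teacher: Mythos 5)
Your argument is correct. The paper itself gives essentially no computation for this corollary: it is stated as an immediate consequence of Properties~\ref{P_1}--\ref{P_3} (quoting Bielas--Plewik--Walczy\'{n}ska), the intended calculation being $\lambda(X)=\lambda\left(\left[\frac23,1\right]\right)+\sum_{n\ge 1}2\cdot 4^{-n}\lambda(X)=\frac13+\frac23\lambda(X)$, i.e.\ summing a geometric series over the countable family of pairwise disjoint copies $C_n^{l}=\sum_{i=1}^{n-1}\frac{2}{4^i}+\frac{1}{4^n}\cdot X$ and $C_n^{r}=h[C_n^{l}]$ from Property~\ref{P_2}. You instead work from the one-level digit decomposition $X=\bigcup_{\alpha\in\{0,2,3,5\}}\left(\frac{\alpha}{4}+\frac14 X\right)$ restricted to $[0,\frac23]$, together with the reflection symmetry $x\mapsto\frac53-x$, and close the computation via the fixed-point equation $u=\frac14\lambda(X)+\frac14 u$ for $u=\lambda\left(X\cap\left[0,\frac23\right]\right)$. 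At bottom this is the same self-similarity computation: unfolding your recursion regenerates exactly the copies $C_n^{l}$ (and, by the symmetry, $C_n^{r}$) of Property~\ref{P_2}. What your organisation buys is self-containedness: you need only Property~\ref{P_1} plus the elementary diameter estimates ($\frac{5}{12}<\frac12$, $\frac34>\frac23$) that make the two surviving pieces disjoint, rather than the full countable disjoint decomposition of $X\setminus\left[\frac23,1\right]$, and Property~\ref{P_3} is not used at all; what the paper's route buys is that the decomposition it invokes is reused later (Property~\ref{p4}) for the Hausdorff-dimension computation. Your bookkeeping at the points $\frac23$ and $1$ is fine, since the three intervals of your partition overlap only in sets of measure zero.
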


According to Properties 1-3, there exists one-to-one correspondence between $X$-intervals and $X$-gaps. Moreover, for the set $X$ the length of all intervals and gaps equals the sum of Guthrie-Nymann's series. It follows that the Lebesgue measure of $X$ is concentrated only on its component intervals, namely on the set $X_I$, while $X_C$ has zero Lebesgue measure.

It is worth mentioning that a decent number of articles are dedicated to solving various topological, metric, and probabilistic problems connected with Guthrie-Nymann's set and other Cantorvals. For instance, the article \cite{Glab} is devoted to the study of the set of uniqueness for Cantorvals, i.e., the set $U(X)$ of all elements of $X$ such that it has only one representation by Guthrie-Nymann's series subsum \eqref{GNS}. It was proved that $U(X)$ is dense in $X$.


We recall the definition and some basic properties of the fractal dimension. Let $E$ be a bounded non-empty set in Euclidean space $R^n$. We define $|E|$ as \emph{diameter} for the set $E$ given by $|E|=\displaystyle \sup\{ |x-y| : x, y \in E \}.$

Let $\varepsilon$ be a fixed positive number. A finite or countable family of sets $\{ E_j \}$ is called $\varepsilon$-cover of the set $E$ if it satisfies the conditions $$E \subset \bigcup E_j, ~ \text{where} ~ |E_j| \leq \varepsilon, E_j \in R^n, \forall j \in N.$$
For any $\varepsilon > 0$ and non-negative $\alpha$, we write 
$$H^{\alpha}_{\varepsilon}(E) \equiv \inf_{|E_j| \leq \varepsilon} \displaystyle \{ |E_j|^{\alpha}: \{ E_j \} ~ \text{is a} ~ \varepsilon\text{-cover of} ~ E \}.$$

Next we define $\alpha$ - \emph{dimensional Hausdorff measure} $(H^{\alpha}-measure)$ of a bounded set $E$ as the value of the function
$$H^{\alpha}(E) \equiv \lim_{\varepsilon \rightarrow 0} H^{\alpha}_{\varepsilon}(E)=\sup_{\varepsilon > 0}H^{\alpha}_{\varepsilon}(E),$$
where we take the precise lower limit by all possible finite or countable coverings of the set $E$ with diameters less than $\varepsilon$. Finally, the critical value of $\alpha$ at which $H^{\alpha}(E)$ jumps from 0 to $\infty$, i.e., a non-negative number $\dim_{H}{E}$ such that
$$\dim_{H}{E}=\sup \{ \alpha : H^{\alpha}(E)=+\infty \}=\inf \{ \alpha : H^{\alpha}(E)=0 \}$$
is called \emph{Hausdorff} or \emph{fractal dimension} of the set $E$.

From now on, by $A \sqcup B$ we denote the disjoint union of the sets $A$ and $B$. Likewise, by $\bigsqcup_{k=1}^{n}A_k$ we denote the disjoint union of the sets $A_1, A_2,\dots , A_n$.

The involution $h: X \rightarrow X$ is defined by the formula
$$h \mapsto h[x]=\frac{5}{3}-x$$
is the symmetry transformation on $X$ with respect to the point 5/6.

To calculate Hausdorff dimension of the set $X_C$, we should first describe its structural properties.

\begin{property}
\label{p4}
The set $X_C$ is a union of pairwise disjoint affine copies of itself with similarity ratios of $1/4^n$, namely
$$X_C = \bigsqcup_{n \in N} \big(\bar{C}_{n}^{l} \sqcup \bar{C}_{n}^{r}\big),$$
where $\displaystyle \bar{C}^{l}_{n}=\sum_{i=1}^{n-1} \frac{2}{4^i} + \frac{1}{4^n} \cdot X_C$ and $\displaystyle \bar{C}^{r}_{n}=h\left[\sum_{i=1}^{n-1} \frac{2}{4^i} + \frac{1}{4^n} \cdot X_C \right]$ are right and left affine copy of $X_C$ with respect to the point $5/6$.
\end{property}

\begin{proof}

Due to Property \ref{P_2} (\cite{Bielas}, p. 693), $X \setminus \left[\frac{2}{3}, 1\right]$ consists of affine copies of itself having the form $C^l_1=\frac{1}{4} \cdot X$ and $C^r_1=h[C^l_1]$, $C^{l}_{2}=\frac{1}{2}+\frac{1}{4^2} \cdot X$ and $C^{r}_{2}=h[C^{l}_{2}]$, $\dots$, $C_{n}^{l}=\sum_{i=1}^{n-1}{\frac{2}{4^n}}+\frac{1}{4^{n}} \cdot X$ and $\displaystyle C_{n}^{r}=h\left[C_{n}^{l}\right]$, and so on. Since $X_C = X \setminus X_I$, every $X$-gap and every exterior of a $X$-interval is a $X_C$-gap. Thus, we have

$$ X \setminus \left[\frac{2}{3}, 1 \right] = \bigsqcup_{n=1}^{\infty} \left( C^{l}_{n} \sqcup C^{r}_{n} \right), $$
It follows that
$$X_C=X \setminus X_I = \left( \bigcup_{n=1}^{\infty} \left( C^{l}_{n} \cup C^{r}_{n}\right) \cup \left[ \frac{2}{3}, 1 \right] \right) \setminus X_I.$$

Since $\left(\frac{2}{3}, 1 \right) \subset X_I$, and defined $\bar{C}^{l}_{n}$ and $\bar{C}^{r}_{n}$ by 
$$\displaystyle \bar{C}^{l}_{n}={C}^{l}_{n} \setminus X_I = \sum_{i=1}^{n-1} \frac{2}{4^i} + \frac{1}{4^n} \cdot X_C,$$
$$\displaystyle \bar{C}^{r}_{n}=C^{r}_{n} \setminus X_I =h\left[\sum_{i=1}^{n-1} \frac{2}{4^i} + \frac{1}{4^n} \cdot X_C \right]=h\left[ \bar{C}^{l}_{n}\right],$$ 
we obtain that
$$X_C \equiv \bigsqcup_{n \in N} \big(\bar{C}_{n}^{l} \sqcup \bar{C}_{n}^{r}\big).$$
The endpoints of the central $X$-interval 2/3 and 1 are the limit points for the sequences $\left(\max C^{l}_{n}\right)_{n=1}^{\infty} \subset X_C$ and $\left(\min C^{r}_{l}\right)_{n=1}^{\infty} \subset X_C$, respectively. The boundary of a set is always closed. So, 2/3 and 1 are contained in $X_C$ as well as endpoints of an arbitrary $X$-interval.

\end{proof}

From Property \ref{p4}, we deduce that $X_C$ consists of countable disjoint affine copies of itself. The set having such properties is called \emph{$N$-self-similar set} (see \cite{PM} and \cite{PF}).
 
\begin{definition}

A bounded set $E \subset R^{n}$ is called \emph{N-self-similar} (or shortly \emph{NSS-set}) if there exists a countable collection of similarities $f_1, f_2, ..., f_n, ...$ such that
\begin{equation}
\label{NSS-condition}
\left\{
\begin{array}{l}
  1) E=f_1(E) \cup f_2(E) \cup f_3(E) \cup \dots \cup f_n(E) \cup \dots ; \\
  2) f_i(E) \bigcap f_j(E) = \emptyset ~ ~ ~ ~ ~ for ~ ~ ~ ~ ~ i \neq j.
\end{array}
\right.
\end{equation}
\end{definition}

NSS-set is different from self-similar sets because it consists of a countable number of self-copies instead of a finite number of copies.
In this context, NSS-set doesn't satisfy the standard open cover condition, and as a consequence, it is not so easy to calculate its Hausdorff dimension.
Let $k_1, k_2, \dots, k_n, \dots$ denote similarity factors for the correspondent contractors $f_1, f_2, \dots, f_n, \dots$, i.e. $k_i=d(f_i(E))/d(E)<1.$
\begin{definition}
Let $E$ satisfies the condition \eqref{NSS-condition}. The number $\alpha_{*}=\alpha_{*}(E)$, which is the solution of the following equation
$$k_1^{x}+k_2^{x}+\dots+k_{n}^{x}+\dots=1$$
is called the \emph{N-self-similar dimension} of the set $E$.
\end{definition}

We remark that there exist numerical sequences $(k_n)$ such that the equation above doesn't have a solution in the tradition sense. In that case, we shall consider that the number $a_{*}$ defined as
$$a_{*}=a_{*}(E)=\sup_{n} \{ x : k_1^{x}+k_2^{x}+\dots+k_{n}^{x}=1\}=\lim_{n \rightarrow \infty} \{ x_n : k_1^{x_n}+k_2^{x_n}+\dots+k_{n}^{x_n}=1\}$$
is the solution of the equation above.

\begin{property}
\label{P5}
The distance between neighbor affine copies $\bar{C}^{l}_{n}, \bar{C}^{l}_{n+1} \subset X_C$ and \newline $\bar{C}^{r}_{n}, \bar{C}^{r}_{n+1} \subset X_C$ can be calculated by the formula
$$d_n=d(\bar{C}^{l}_{n}, \bar{C}^{l}_{n+1})=d(\bar{C}^{r}_{n}, \bar{C}^{r}_{n+1})=\frac{1}{3} \cdot \frac{1}{4^n}.$$
\end{property}

\begin{proof}
Since $\displaystyle \bar{C}^{l}_{n}=\sum_{i=1}^{n-1} \frac{2}{4^i} + \frac{1}{4^n} \cdot X_C$ and $\displaystyle \bar{C}^{l}_{n+1}=\sum_{i=1}^{n} \frac{2}{4^i} + \frac{1}{4^{n+1}} \cdot X_C$ we get

$$\max \bar{C}^{l}_{n} = \sum_{i=1}^{n-1} \frac{2}{4^i} + \frac{1}{4^n} \cdot \max X_C = \sum_{i=1}^{n-1} \frac{2}{4^i} + \frac{1}{4^n} \cdot \frac{5}{3}.$$

On the other hand,

$$\min \bar{C}^{l}_{n+1}=\sum_{i=1}^{n} \frac{2}{4^i} + \frac{1}{4^{n+1}} \cdot \min X_C = \sum_{i=1}^{n} \frac{2}{4^i}.$$

Thus, we observe that 
$$d(\bar{C}^{l}_{n}, \bar{C}^{l}_{n+1}) = \min \bar{C}^{l}_{n+1} - \max \bar{C}^{l}_{n}=\frac{2}{4^{n+1}}-\frac{1}{4^n} \cdot \frac{5}{3}=\frac{1}{3} \cdot \frac{1}{4^n}.$$
Similarly, we can proof that $d(C^{r}_{n}, C^{r}_{n+1})=\frac{1}{3} \cdot \frac{1}{4^n}$. Here we see that $d_n \rightarrow 0$ when $n \rightarrow \infty$. 

\end{proof}

\begin{property}
\label{P6}
The distance between symmetrical with respect to the point $5/6$ affine copies $\bar{C}^{l}_{n}, \bar{C}^{r}_{n} \subset X_C$ can be calculated by the formula
$$s_n=d(\bar{C}^{l}_{n}, \bar{C}^{r}_{n})= \frac{5}{3} - 2 \cdot \left( \sum_{i=1}^{n} \frac{2}{4^i} + \frac{1}{4^n} \cdot \frac{5}{3}\right).$$
\end{property}

\begin{proof}
Taking into account the correspondence between $\bar{C}^{l}_{n}$ and $\bar{C}^{r}_{n}$, it is easy to observe that
$$d(\bar{C}^{l}_{n}, \bar{C}^{r}_{n})=\min \bar{C}^{r}_{n} - \max \bar{C}^{l}_{n}.$$
Since
$$\min \bar{C}^{r}_{n}=\frac{5}{3}-\left(\sum_{i=1}^{n-1} \frac{2}{4^i} + \frac{1}{4^n} \cdot \max X_C \right)=\frac{5}{3} - \sum_{i=1}^{n-1} \frac{2}{4^i} - \frac{1}{4^n} \cdot \frac{5}{3},$$
$$\max \bar{C}^{l}_{n}=\sum_{i=1}^{n-1} \frac{2}{4^i} + \frac{1}{4^n} \cdot \max X_C=\sum_{i=1}^{n-1} \frac{2}{4^i} + \frac{1}{4^n} \cdot \frac{5}{3},$$
we get
$$d(\bar{C}^{l}_{n}, \bar{C}^{r}_{n})=\frac{5}{3} - 2 \cdot \left( \sum_{i=1}^{n-1} \frac{2}{4^i} + \frac{1}{4^n} \cdot \frac{5}{3}\right).$$
Moreover, we observe that 
$$\lim_{n \rightarrow \infty}{\max \bar{C}^{l}_{n}}=\frac{2}{3} ~ ~ ~ ~ ~ \text{and} ~ ~ ~ ~ ~ \lim_{n \rightarrow \infty}{\min \bar{C}^{r}_{n}}=1,$$
that's why $s_n \rightarrow 1/3$ when $n \rightarrow \infty$.
\end{proof}

Due to Properties \ref{p4}-\ref{P6}, we can illustrate the set $X_C$ in the following picture:

\begin{figure}[h]
\center{\includegraphics[scale=0.32]{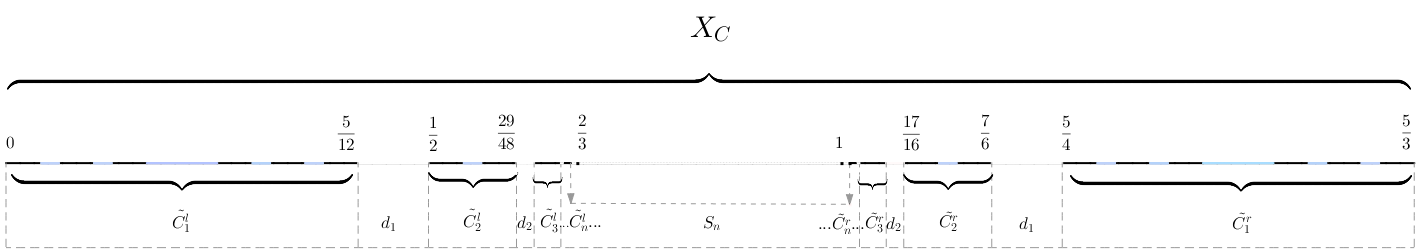}}
\caption{The set $X_C$ consists of countable disjoint affine copies $\bar{C}^{l}_{n}, \bar{C}^{r}_{n}$.}
\end{figure}

\begin{theorem}
The set $X_C$ is a NSS-set having the N-self-similar dimension equal to $a_{*}(X_C)=\log_{4} 3.$
\end{theorem}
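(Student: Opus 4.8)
The plan is to read the N-self-similar structure of $X_C$ directly off Property \ref{p4} and then to solve the equation defining its N-self-similar dimension. First I would recast Property \ref{p4} in terms of maps: putting $f^{l}_{n}(x) = \sum_{i=1}^{n-1}\frac{2}{4^i} + \frac{1}{4^n}\,x$ and $f^{r}_{n} = h \circ f^{l}_{n}$, we have $\bar{C}^{l}_{n} = f^{l}_{n}(X_C)$ and $\bar{C}^{r}_{n} = f^{r}_{n}(X_C)$, so each of $f^{l}_{n}$ and $f^{r}_{n}$ is a similarity with ratio $1/4^{n}$ (both are affine with linear part of absolute value $4^{-n}$, hence scale diameters by $4^{-n}$). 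Enumerating the countable collection $\{ f^{l}_{n}, f^{r}_{n} : n \in N \}$ as a single sequence $(f_j)_{j \in N}$, the identity $X_C = \bigsqcup_{n \in N}\big(\bar{C}^{l}_{n} \sqcup \bar{C}^{r}_{n}\big)$ of Property \ref{p4} is exactly conditions 1) and 2) in the definition of an NSS-set; the disjointness in 2) is corroborated by Properties \ref{P5} and \ref{P6}, which give $d_n, s_n > 0$ and place all left copies strictly to the left of the central interval $[2/3, 1]$ and all right copies strictly to its right. Hence $X_C$ is an NSS-set.

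It then remains to identify $a_{*}(X_C)$. The contraction factors of the sequence $(f_j)$ are the numbers $4^{-n}$, each occurring twice (once for $f^{l}_{n}$ and once for $f^{r}_{n}$), so the equation defining the N-self-similar dimension becomes $\sum_{n=1}^{\infty} 2\,(4^{-n})^{x} = 1$. For $x > 0$ the left-hand side is a convergent geometric series equal to $\frac{2 \cdot 4^{-x}}{1 - 4^{-x}}$, so the equation reduces to $2 \cdot 4^{-x} = 1 - 4^{-x}$, i.e. $3 \cdot 4^{-x} = 1$, i.e. $4^{x} = 3$, which yields $x = \log_{4} 3$. Since $x \mapsto 2\sum_{n} 4^{-nx}$ is continuous and strictly decreasing on the range where it is finite, this root is unique and is attained in the ordinary sense; therefore $a_{*}(X_C) = \log_{4} 3$.

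The argument is little more than bookkeeping once Property \ref{p4} is in hand, so I do not anticipate a genuine obstacle. The two points that call for a moment's care are verifying that the countable reindexing of the two-parameter family $\{ f^{l}_{n}, f^{r}_{n} \}$ keeps the images pairwise disjoint, and confirming that $\sum_j k_j^{x}$ actually equals $1$ at $x = \log_{4} 3$ (so that the N-self-similar dimension exists in the traditional sense and one need not fall back on the auxiliary $\sup/\lim$ definition) — both of which are settled by the explicit geometric-series computation above.
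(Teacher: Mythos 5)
Your proposal is correct and follows essentially the same route as the paper: it invokes Property \ref{p4} to establish the NSS structure with similarity factors $1/4^n$ (each occurring twice) and then solves $2\sum_{n\ge 1}4^{-nx}=1$ via the geometric series to obtain $x=\log_4 3$. The extra care you take (explicitly writing the similarities and noting the root exists in the traditional sense, so the auxiliary $\sup/\lim$ definition is not needed) is a welcome but minor refinement of the paper's argument.
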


\begin{proof}
Due to Property \ref{p4}, $X_C$ is a NSS-set as it consists of a countable union of pairwise disjoint affine copies of itself with similarity factors of $1/4^n, n \in N$. According to the definition, the NSS-dimension of $X_C$ can be found as the solution of the following equation:
\begin{equation}
\label{NSS-eq}
2\left( \frac{1}{4}\right)^x + 2\left( \frac{1}{4^2}\right)^x + 2\left( \frac{1}{4^3}\right)^x + \dots +2\left( \frac{1}{4^n}\right)^x + \dots= 1,
\end{equation}
from it follows
$$\frac{4^{-x}}{1-4^{-x}}=\frac{1}{2} ~ ~ ~ \Rightarrow ~ ~ ~ 4^{x}=3 ~ ~ ~ \Rightarrow ~ ~ ~ x=\log_{4} 3.$$
\end{proof}

We should note that the self-similar dimension (N-self-similar dimension) is not equal to the Hausdorff dimension in general. The simplest example of this fact is the set of all rational numbers $Q$ whose self-similar and N-self-similar dimensions equal 1, while its Hausdorff dimension equals 0.

\begin{theorem}
The Hausdorff dimension of the set $X_C$ is equal to its NSS-dimension.
\end{theorem}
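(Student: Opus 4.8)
The plan is to prove the two inequalities $\dim_{H}X_C\le\log_{4}3$ and $\dim_{H}X_C\ge\log_{4}3$ separately; throughout write $\alpha:=\log_{4}3=a_{*}(X_C)$ for the $N$-self-similar dimension just computed, so that $4^{\alpha}=3$ and $\sum_{n\ge1}2\cdot4^{-n\alpha}=1$. For the upper bound I would iterate Property~\ref{p4}: applying that decomposition $k$ times exhibits $X_C$, up to a countable set of accumulation points which carries no $H^{\alpha}$-mass for $\alpha>0$, as a disjoint union of affine copies of $X_C$ indexed by $(\sigma_1,n_1,\dots,\sigma_k,n_k)$ with $\sigma_i\in\{l,r\}$ and $n_i\in N$, the copy with index $(\sigma_1,n_1,\dots,\sigma_k,n_k)$ having diameter $\tfrac53\,4^{-(n_1+\dots+n_k)}$. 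Given $\varepsilon>0$, choosing $k$ with $\tfrac53 4^{-k}\le\varepsilon$ makes this family an $\varepsilon$-cover, and since $\sum_{n\ge1}4^{-n\alpha}=\tfrac{4^{-\alpha}}{1-4^{-\alpha}}=\tfrac12$ one gets
\[
H^{\alpha}_{\varepsilon}(X_C)\ \le\ \Big(\frac53\Big)^{\alpha}\,2^{k}\Big(\sum_{n\ge1}4^{-n\alpha}\Big)^{k}\ =\ \Big(\frac53\Big)^{\alpha}\,2^{k}\Big(\frac12\Big)^{k}\ =\ \Big(\frac53\Big)^{\alpha}.
\]
Letting $\varepsilon\to0$ yields $H^{\alpha}(X_C)\le(\tfrac53)^{\alpha}<\infty$, hence $\dim_{H}X_C\le\alpha$.

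For the lower bound I would use the mass distribution principle. Define a Borel probability measure $\mu$ on $X_C$ by assigning mass $3^{-n}=4^{-n\alpha}$ to each level-one copy $\bar{C}^{l}_{n}$ and $\bar{C}^{r}_{n}$ and iterating self-similarly, so that every depth-$k$ cylinder $A$ as above has $\mu(A)=3^{-(n_1+\dots+n_k)}=(\tfrac35)^{\alpha}|A|^{\alpha}$. The aim is a uniform estimate $\mu(U)\le c\,|U|^{\alpha}$ over all intervals $U$; granting it, the mass distribution principle gives $H^{\alpha}(X_C)\ge\mu(X_C)/c=1/c>0$, so $\dim_{H}X_C\ge\alpha$, which together with the upper bound proves the theorem. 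To obtain the estimate I would argue by peeling off levels: if $U$ meets only one level-one piece $\bar{C}^{\sigma}_{n}$, replace $U$ by the convex hull of $U\cap\bar{C}^{\sigma}_{n}$ and rescale by the similarity of ratio $4^{-n}$ carrying that piece onto $X_C$, obtaining an interval of length at most $4^{n}|U|$ lying in a copy of $X_C$; because $4^{\alpha}=3$, this step changes neither side of the desired inequality, so one may repeat. Either the procedure halts after finitely many steps at an interval meeting at least two level-one pieces, or it never halts, in which case $U\cap X_C$ is a single point and $\mu(U)=0$ by non-atomicity (the partial products $3^{-(n_1+\dots+n_k)}$ tend to $0$). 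Thus it suffices to bound $\mu(U)/|U|^{\alpha}$ when $U$ meets at least two level-one pieces.

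Here Properties~\ref{P5}--\ref{P6} are decisive. Since $s_n\to\tfrac13$, an interval shorter than $\tfrac13$ cannot meet both a left and a right copy, so such a $U$ meets only, say, left copies; if $a$ is the smallest index met, then either $U$ also meets some $\bar{C}^{l}_{b}$ with $b>a$, forcing $|U|\ge d_a=\tfrac13 4^{-a}$, or $U$ swallows the whole tail $\bigsqcup_{n\ge a}\bar{C}^{l}_{n}$ together with its accumulation point $\tfrac23$, forcing $|U|\ge 4^{-a}$; in either case $\mu(U)\le\sum_{n\ge a}3^{-n}=\tfrac32 4^{-a\alpha}$, so $\mu(U)/|U|^{\alpha}\le\tfrac32\,3^{\alpha}$, while intervals of length $\ge\tfrac13$ are handled crudely by $\mu(U)\le1$. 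Collecting the cases and unwinding the peeling (each step being scale-neutral) one may take $c=\tfrac32\,3^{\alpha}$, which completes the argument.

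The main obstacle is exactly this last estimate. Because $X_C$ is an $NSS$-set whose contraction ratios $4^{-n}$ accumulate at $0$, there is no single generation all of whose cylinders have diameter comparable to $|U|$, so the standard open-set-condition argument does not apply directly (this is why, as the remark before the theorem stresses, the equality $\dim_{H}=a_{*}$ can fail in general). The peeling reduction is meant to circumvent this, but it genuinely relies on the exact identity $4^{\alpha}=3$ so that each peeling step is scale-neutral, on the observation that the only non-terminating branch yields a single point of zero mass, and on the quantitative separation of Properties~\ref{P5} and~\ref{P6}, which by self-similarity is inherited at every level. An alternative route would be to verify the hypotheses of a general theorem on the Hausdorff dimension of $NSS$-sets satisfying a strong separation condition, such as those in \cite{PM} and \cite{PF}, and quote it for $X_C$.
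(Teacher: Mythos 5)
Your proposal is correct, but it is a genuinely different (and substantially more complete) argument than the one in the paper. The paper's proof simply asserts that $0<H^{\alpha}(X_C)<\infty$ for $\alpha=\log_4 3$ and then displays the self-consistency identity $H^{\alpha}(X_C)=H^{\alpha}(X_C)\sum_n 2\cdot 4^{-n\alpha}$, which holds trivially once the sum equals $1$ and by itself cannot pin down the dimension (the paper's own example of $\mathbb{Q}$, whose NSS-dimension is $1$ but whose Hausdorff dimension is $0$, shows that the countable decomposition of Property~\ref{p4} alone does not suffice). You instead prove the two bounds that the paper only asserts: the upper bound by iterating Property~\ref{p4} into level-$k$ cylinders of diameter $\frac{5}{3}4^{-(n_1+\cdots+n_k)}$ (correctly noting that the countably many limit points such as $2/3$ and $1$, which the decomposition misses, carry no $H^{\alpha}$-mass), giving $H^{\alpha}(X_C)\le(5/3)^{\alpha}$; and the lower bound by constructing the natural self-similar measure with cylinder masses $3^{-(n_1+\cdots+n_k)}$ and verifying $\mu(U)\le c|U|^{\alpha}$ via a scale-neutral peeling argument that ultimately rests on the separation data of Properties~\ref{P5} and~\ref{P6} (the gap $d_a=\frac{1}{3}4^{-a}$ between consecutive copies and the gap exceeding $\frac13$ between left and right copies), so the mass distribution principle yields $H^{\alpha}(X_C)\ge 1/c>0$. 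Minor remarks: your ``$U$ swallows the whole tail'' subcase is vacuous, since once $U$ meets two left copies the smallest index $a$ met is accompanied by some $b>a$ and $|U|\ge d_a$ already follows; and the non-halting branch of the peeling is cleaner phrased as $\mu(U)\le 3^{-(n_1+\cdots+n_k)}\to 0$ than via non-atomicity, though both are fine. In short, your route costs a page of work but actually establishes the positivity and finiteness of $H^{\alpha}(X_C)$, which is exactly the gap in the paper's one-line argument; the paper's approach buys only brevity.
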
 

\begin{proof}
First things first, we observe that for $\alpha=\log_{4}{3}$ the identity \eqref{NSS-eq} holds. 
It means that $0<H^{\alpha}(X_C)<\infty$ when $\alpha=\log_{4}{3}$. Taking into account the property of the $\alpha$-dimensional Hausdorff measure, we get
$$H^{\alpha}(X_C)=H^{\alpha}\left( \bigsqcup_{n \in N} \big(\bar{C}_{n}^{l} \sqcup \bar{C}_{n}^{r} \big)\right)=\sum_{n=1}^{\infty}{H^{\alpha}(\bar{C}_{n}^{l})} + \sum_{n=1}^{\infty}{H^{\alpha}(\bar{C}_{n}^{r})}=$$
$$=H^{\alpha}(X_C)\left( 2 \cdot \left( \frac{1}{4}\right)^{\alpha} + 2 \cdot \left( \frac{1}{4^2}\right)^{\alpha} + \left( \frac{1}{4^3}\right)^{\alpha} + \dots +2 \cdot \left( \frac{1}{4^n}\right)^{\alpha} + \dots\right).$$
As a consequence, $\alpha=\log_{4} 3$ is the common value for the NSS-dimension and Hausdorff dimension of $X_C$. 
\end{proof}

\begin{corollary}
The Cantorval $X$, which is the set of subsums for Guthrie-Nymann's series, can be represented as
$X=X_{I} \bigsqcup X_{C}$, where $X_{I}$ is an infinity union of open intervals having the Lebesgue measure equal to 1, $X_{C}$ is a Cantor set with zero Lebesgue measure and fractional Hausdorff dimension $\dim_{H}X_{C}=\log_4 3$.
\end{corollary}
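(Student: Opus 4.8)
The plan is to assemble the statement from the structural facts already established, so the argument is mostly bookkeeping. First, by the definitions recalled above, $X_I$ is the interior of $X$ and $X_C=X\setminus X_I$ is its boundary; since the Cantorval $X$ is compact, hence closed, we have the disjoint decomposition $X=X_I\sqcup X_C$, and $X_I$ is exactly the union of the open intervals that are interiors of the $X$-intervals. As noted after Property \ref{P_3}, there are infinitely many $X$-intervals (indeed $X$ is not a finite union of closed intervals), and they are in one-to-one correspondence with the $X$-gaps, of which there are countably many; hence $X_I$ is a countable infinite union of open intervals.

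Second, for the measure part: by Properties \ref{P_1}--\ref{P_3} and the Corollary of \cite{Bielas} we have $\lambda(X)=1$. By the two theorems immediately preceding, $\dim_H X_C=\log_4 3<1$, and therefore $H^1(X_C)=0$, i.e. $\lambda(X_C)=0$ (equivalently, this follows from the equal-length correspondence between $X$-intervals and $X$-gaps noted above, which forces the whole Lebesgue measure of $X$ to be carried by $X_I$). Consequently $\lambda(X_I)=\lambda(X)-\lambda(X_C)=1$.

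Third, to show that $X_C$ is a Cantor set in the sense of Theorem \ref{GN-Theorem}, i.e. homeomorphic to the ternary Cantor set, it suffices to check that $X_C$ is a nonempty compact metric space that is perfect and totally disconnected, and then invoke the topological characterization of the Cantor set. Compactness is immediate, since $X_C$ is closed in the compact set $X$. If $X_C$ contained a nondegenerate interval $J$, then $J$ would lie in the interior of $X$, hence in $X_I$, contradicting $X_I\cap X_C=\emptyset$; thus $X_C$ contains no interval, so (being a subset of $\mathbb{R}$) its connected components are singletons and it is totally disconnected, and being closed with empty interior it is nowhere dense. Finally, $X_C$ is perfect: by Property \ref{p4} it is the disjoint union of the affine copies $\bar C_n^{l},\bar C_n^{r}$ with ratios $4^{-n}\to 0$, arranged by Properties \ref{P5}--\ref{P6} so that $\max\bar C_n^{l}\to 2/3$ and $\min\bar C_n^{r}\to 1$; iterating the self-similarity shows that every point of $X_C$ is approached arbitrarily closely by other points of $X_C$, so $X_C$ has no isolated point.

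Fourth, the value of the dimension is precisely the content of the two preceding theorems: $X_C$ is an NSS-set with $a_*(X_C)=\log_4 3$, and its Hausdorff dimension equals its NSS-dimension, so $\dim_H X_C=\log_4 3$, a fractional number (it is not an integer, since $3$ is not a power of $4$). Combining the four points yields the corollary. The only genuinely delicate step is the perfectness of $X_C$; the cleanest route is the self-similar description of Property \ref{p4}: in any neighbourhood of a point $x\in X_C$ one descends through the copies $\bar C_n^{l}$ or $\bar C_n^{r}$ to a deep enough level to find a full affine copy of $X_C$ close to $x$, which already contains infinitely many points of $X_C$, and a short argument upgrades this to accumulation by loose points and endpoints of $X$-intervals from both sides.
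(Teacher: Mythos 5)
Your proposal is correct and follows essentially the same route as the paper, which states this corollary without a separate proof as a direct compilation of the earlier results: the measure-1 corollary from \cite{Bielas}, the observation that the measure of $X$ is carried by the component intervals (equivalently, your $H^{1}$ argument from $\dim_{H}X_{C}=\log_{4}3<1$), and the two theorems identifying the NSS- and Hausdorff dimensions of $X_{C}$. Your added verification that $X_{C}$ is compact, totally disconnected and perfect (hence homeomorphic to the Cantor set) is a sensible filling-in of a step the paper leaves implicit, and it correctly covers the endpoints $2/3$ and $1$, which are accumulation points of the copies $\bar{C}_{n}^{l}$, $\bar{C}_{n}^{r}$ rather than members of them.
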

\end{section}

\begin{section}{Generalization of the main result}

In this section, we shall extend the above result to a countable family of Cantorvals having the following form:
$$X(m)=\left\{ \sum_{n=1}^{\infty} \frac{\varepsilon_n}{(2m+2)^n} : (\varepsilon_n) \in \{0, 2, 3, \dots, 2m+1, 2m+3 \}^N \right\},$$
which are sets of subsums for the series
$$3q+\underbrace{2q+\dots+2q}_m+3q^2+\underbrace{2q^2+\dots+2q^2}_m+\dots+3q^n+\underbrace{2q^n+\dots+2q^n}_m+\dots$$ 
where $q=1/(2m+2), m \in N$. It is worth mentioning that for $m=1$ we have case $X(1)=X$, which was well described in the previous section.

One can see \cite{Banakiewicz} that $X(m)$ is the attractor for an iterated function system $\{ w_i: R \rightarrow R \}$, where $i=1,\dots,2m+2$, $m \in N$, and all contractors $w_i$ are similarities given by
\begin{enumerate}
\item $\displaystyle w_1(x)=\frac{x}{2m+2}$;
\item $\displaystyle w_i(x)=\frac{i}{2m+2}+\frac{x}{2m+2}$ for $i \in \{2, 3, \dots, 2m+1 \}$;
\item $\displaystyle w_{2m+2}(x)=\frac{2m+3}{2m+2}+\frac{x}{2m+2}$.
\end{enumerate}
That means $$X(m)=\bigcup_{i=1}^{m+2}w_i(X(m))=W(X(m)).$$
\begin{theorem}(\cite{Banakiewicz})
\label{BT}
Let $m \in N$ and $I=\left[ 0, \frac{2m+3}{2m+1} \right]$. Then
$$X(m)= \bigcap_{n \in N}W^{n}(I),$$
where $W^n(I)=W(W^{n-1}(I))$ and $W^0=id$. Moreover,
$$W^{n}(I)=\bigsqcup_{k=0}^{n-1}w_2^k \circ w_1(W^{n-1-k}(I)) \sqcup \left[ w_2^n(0), w^{n}_{2m+1}\left(\frac{2m+3}{2m+1}\right)\right] \sqcup $$
$$\sqcup \bigsqcup_{k=0}^{n-1}w_{2m+1}^{n-1-k} \circ w_{2m+2}(W^k(I)).$$
\end{theorem}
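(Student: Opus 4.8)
The plan is to prove the two assertions separately. For the identity $X(m)=\bigcap_{n\in N}W^{n}(I)$, I would invoke the classical theory of iterated function systems: since every $w_i$ is a contraction with the common ratio $\frac1{2m+2}<1$, the Hutchinson operator $W$ is a contraction on the nonempty compact subsets of $R$ in the Hausdorff metric with unique fixed point $X(m)$, and whenever a nonempty compact set $K$ satisfies $W(K)\subseteq K$ one has $W^{n+1}(K)\subseteq W^{n}(K)$ and $\bigcap_nW^n(K)=X(m)$. So it is enough to check $W(I)\subseteq I$ for $I=[0,L]$ with $L=\tfrac{2m+3}{2m+1}$, i.e. $w_i(I)\subseteq I$ for all $i$. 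The map $w_1$ fixes $0$, so $w_1(I)=[0,\tfrac{L}{2m+2}]\subseteq I$ at once; a one-line computation gives $w_{2m+2}(L)=\frac{2m+3+L}{2m+2}=L$ (this is exactly why $L$ must equal $\tfrac{2m+3}{2m+1}$), so $w_{2m+2}(I)\subseteq I$; and for $2\le i\le 2m+1$ one has $w_i(I)=[\tfrac{i}{2m+2},\tfrac{i+L}{2m+2}]\subseteq[0,\tfrac{2m+1+L}{2m+2}]\subseteq I$. Hence $W^n(I)$ decreases to $X(m)$.

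For the explicit form of $W^n(I)$ I would argue by induction on $n$, starting from $W^n(I)=W(W^{n-1}(I))=\bigcup_{i=1}^{2m+2}w_i(W^{n-1}(I))$ and the inductive decomposition $W^{n-1}(I)=L_{n-1}\sqcup M_{n-1}\sqcup R_{n-1}$, where $L_{n-1}=\bigsqcup_{k=0}^{n-2}w_2^{k}\circ w_1(W^{n-2-k}(I))$, $M_{n-1}=[w_2^{n-1}(0),w_{2m+1}^{n-1}(L)]$ is a genuine interval, and $R_{n-1}=\bigsqcup_{k=0}^{n-2}w_{2m+1}^{n-2-k}\circ w_{2m+2}(W^{k}(I))$. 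The base case $n=1$ is a direct check: $\bigcup_{i=2}^{2m+1}w_i(I)$ is the single interval $[w_2(0),w_{2m+1}(L)]$ because consecutive blocks overlap ($\max w_i(I)=\tfrac{i+L}{2m+2}>\tfrac{i+1}{2m+2}=\min w_{i+1}(I)$, using $L>1$), while $w_1(I)$ lies strictly to its left and $w_{2m+2}(I)$ strictly to its right (both because $L<2$, which holds for $m\ge1$). For the inductive step, the identity $w_2\circ w_2^{k}\circ w_1=w_2^{k+1}\circ w_1$ shows that $w_1(W^{n-1}(I))$ (the new $k=0$ summand) together with $w_2(L_{n-1})$ is exactly $L_n$; symmetrically $w_{2m+2}(W^{n-1}(I))$ together with $w_{2m+1}(R_{n-1})$ is $R_n$; and the reflection $x\mapsto L-x$, which conjugates $w_i$ to $w_{2m+3-i}$ for $2\le i\le 2m+1$ and $w_1$ to $w_{2m+2}$ and fixes $I$, shows $R_n$ is the mirror image of $L_n$, so only the formation of $M_n$ needs genuine work. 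There I would use the shift relation $w_{i+1}(x)=w_i(x+1)$ (valid for $2\le i\le 2m$), which rewrites $\bigcup_{i=2}^{2m+1}w_i(W^{n-1}(I))$ as $w_2\big(\bigcup_{j=0}^{2m-1}(W^{n-1}(I)+j)\big)$, reducing everything to showing that $\bigcup_{j=0}^{2m-1}(W^{n-1}(I)+j)$ fills the interval $[w_2^{n-1}(0),\,2m-1+w_{2m+1}^{n-1}(L)]$ solidly; its image under $w_2$ is precisely $M_n$. Once $M_n$ is known to be a full interval, the disjointness $\max L_n<\min M_n$ and $\max M_n<\min R_n$ follows from endpoint estimates that again reduce to $L<2$.

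The step I expect to be the real obstacle is exactly this solidity: each column $w_i(W^{n-1}(I))$ carries a scaled copy of all the many gaps of $W^{n-1}(I)$, and one must prove that where two consecutive columns overlap the gaps of one fall into the solid part of the other. Since $(W^{n-1}(I)+j)\cup(W^{n-1}(I)+j+1)=j+\big(W^{n-1}(I)\cup(W^{n-1}(I)+1)\big)$, the whole matter reduces to the inclusion $[1,L]\subseteq W^{n-1}(I)\cup(W^{n-1}(I)+1)$, equivalently $[0,\tfrac{2}{2m+1}]\subseteq W^{n-1}(I)\cup(W^{n-1}(I)-1)$. This is genuinely fractal: the solid core $M_{n-1}$ alone does not suffice — with $m=1$, $n=2$ the intervals $M_1$ and $M_1+1$ do not even meet — so one must use that the gaps near the right end of $W^{n-1}(I)$, translated by $-1$, land inside the leftmost solid piece $[0,\tfrac{L}{(2m+2)^{n-1}}]$ of $W^{n-1}(I)$ and, more generally, inside the left fringe $L_{n-1}$ together with the core $M_{n-1}$. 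Accordingly I would carry the displayed inclusion along as part of a strengthened simultaneous induction hypothesis; what makes it go through is the exact value $L=\tfrac{2m+3}{2m+1}$ and the fixed-point inequalities $w_2^{n}(0)<\tfrac{2}{2m+1}<1<w_{2m+1}^{n}(L)\le L$, valid for every $n$ because $\tfrac{2}{2m+1}$ and $1$ are the attracting fixed points of $w_2$ and $w_{2m+1}$. In the limit $n\to\infty$ the intervals $M_n$ decrease to $[\tfrac{2}{2m+1},1]$, which recovers Property \ref{P_1} when $m=1$ and yields $[\tfrac{2}{2m+1},1]\subseteq X(m)$ in general.
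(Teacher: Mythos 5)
The paper itself offers no proof of this statement: Theorem \ref{BT} is imported from \cite{Banakiewicz}, so there is no internal argument to compare yours with, and I can only judge the proposal on its own terms. Its architecture is sound. Reducing $X(m)=\bigcap_{n}W^{n}(I)$ to the invariance $W(I)\subseteq I$ is standard, and your verification (in particular $w_{2m+2}(L)=L$ for $L=\frac{2m+3}{2m+1}$, together with $1<L<2$) is correct; the base case $n=1$, the identity $w_2\circ w_2^{k}\circ w_1=w_2^{k+1}\circ w_1$ assembling $L_n$, the reflection $x\mapsto L-x$ (which indeed conjugates the system to itself and fixes $I$) giving $R_n$, the shift relation $w_{i+1}(x)=w_i(x+1)$, and the disjointness estimates that reduce to $L<2$ all check out. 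You have also correctly isolated the only nontrivial point: that the columns $w_2,\dots,w_{2m+1}$ merge into the solid interval $\left[w_2^{n}(0),w_{2m+1}^{n}(L)\right]$, which you reduce, correctly, to the inclusion $[1,L]\subseteq W^{n-1}(I)\cup\left(W^{n-1}(I)+1\right)$.

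The gap is that this pivotal inclusion is never actually proved: you announce that it should be carried as a \emph{strengthened simultaneous induction hypothesis} and point to the fixed-point inequalities, but no induction step is given, and since everything else is bookkeeping, this is exactly where the content of the theorem lies. The good news is that your hypothesis does close inductively, and quickly, so the plan is salvageable essentially as stated. One way: assume $[1,L]\subseteq W^{n-1}(I)\cup\left(W^{n-1}(I)+1\right)$ and recall $M_{n-1}\supseteq\left[\frac{2}{2m+1},1\right]$. The four maps $w_{2m+1}$, $w_1+1$, $w_{2m+2}$, $w_2+1$ are the translates of $x\mapsto\frac{x}{2m+2}$ by $\frac{2m+1+j}{2m+2}$, $j=0,1,2,3$, hence
$$W^{n}(I)\cup\bigl(W^{n}(I)+1\bigr)\supseteq\frac{1}{2m+2}\Bigl(2m+1+\bigcup_{j=0}^{3}\bigl(W^{n-1}(I)+j\bigr)\Bigr).$$
The hypothesis translated by $j=0,1,2$ covers $[1+j,L+j]$, while the cores $M_{n-1}+1$ and $M_{n-1}+2$ cover the complementary gaps $(L,2)$ and $(1+L,3)$ because $L=1+\frac{2}{2m+1}$; hence $\bigcup_{j=0}^{3}\bigl(W^{n-1}(I)+j\bigr)\supseteq[1,2+L]$, and the right-hand side of the display contains $\frac{1}{2m+2}\bigl(2m+1+[1,2+L]\bigr)=\bigl[1,w_{2m+2}(L)\bigr]=[1,L]$, which is the inclusion at level $n$. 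With that lemma in hand, your covering of $\left[w_2^{n-1}(0),\,2m-1+w_{2m+1}^{n-1}(L)\right]$ by the translates $W^{n-1}(I)+j$ (cores plus consecutive-translate gaps, each gap being a translate of a subinterval of $[1,L]$) is fine, and the rest of your induction goes through.
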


\begin{theorem}
The Lebesgue measure of the M-Cantorval $X(m)$ is equal to 1 and it is equal to the sum of lengths of all its component intervals ($X(m)-intervals$). 
\end{theorem}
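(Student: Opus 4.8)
The plan is to extract from Theorem~\ref{BT}, by letting $n\to\infty$, a self-similar description of $X(m)$ strictly parallel to Properties~\ref{P_1}--\ref{P_2} for the case $m=1$, and then to convert it into a linear equation for the Lebesgue measure. Write $\lambda$ for Lebesgue measure on $R$, set $q=\frac{1}{2m+2}$, and note that $\mu:=\lambda(X(m))<\infty$ because $X(m)\subset I=\left[0,\frac{2m+3}{2m+1}\right]$. A short computation with geometric progressions gives $w_2^{n}(0)=\frac{2}{2m+1}(1-q^{n})$ and $w_{2m+1}^{n}\!\left(\frac{2m+3}{2m+1}\right)=1+\frac{2}{2m+1}q^{n}$, so the central blocks in Theorem~\ref{BT} form a decreasing sequence of intervals with intersection $J:=\left[\frac{2}{2m+1},1\right]\subset X(m)$; for each fixed $k$ the left block $w_2^{k}\circ w_1\bigl(W^{n-1-k}(I)\bigr)$ decreases to $w_2^{k}\circ w_1(X(m))$ since $W^{j}(I)$ decreases to $X(m)$, and after re-indexing the right family by $j=n-1-k$ the block $w_{2m+1}^{n-1-k}\circ w_{2m+2}\bigl(W^{k}(I)\bigr)$ tends to $w_{2m+1}^{j}\circ w_{2m+2}(X(m))$. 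All of these blocks lie inside the pairwise disjoint intervals $w_2^{k}\circ w_1(I)$, $J$, $w_{2m+1}^{j}\circ w_{2m+2}(I)$ (disjointness with positive gaps, for every $m\in N$, being read off the explicit endpoints), so intersecting over $n$ yields
\begin{equation*}
X(m)=J\ \sqcup\ \bigsqcup_{k=0}^{\infty}\Bigl(w_2^{k}\circ w_1\bigl(X(m)\bigr)\ \sqcup\ w_{2m+1}^{k}\circ w_{2m+2}\bigl(X(m)\bigr)\Bigr),
\end{equation*}
a disjoint union of the interval $J$ with countably many affine copies of $X(m)$, two of ratio $q^{k+1}$ for each $k\ge 0$. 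For $m=1$ this reduces precisely to Properties~\ref{P_1} and~\ref{P_2}.

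Applying $\lambda$ to this disjoint union and using $\lambda(J)=\frac{2m-1}{2m+1}$, the similarity ratios, and $\sum_{k\ge0}q^{k+1}=\frac{q}{1-q}=\frac{1}{2m+1}$, I obtain
\begin{equation*}
\mu=\frac{2m-1}{2m+1}+\frac{2}{2m+1}\,\mu .
\end{equation*}
Hence $\frac{2m-1}{2m+1}\mu=\frac{2m-1}{2m+1}$, and since $2m-1>0$ for $m\in N$ this gives $\mu=\lambda(X(m))=1$.

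For the second assertion, let $X(m)_I$ denote the interior of $X(m)$ and $X(m)_C=X(m)\setminus X(m)_I$ its boundary; the sum of the lengths of all $X(m)$-intervals equals $\lambda(X(m)_I)$. Each side copy in the decomposition above lies at a positive distance from the rest of $X(m)$, so the affine homeomorphism taking $X(m)$ onto that copy also takes $X(m)_I$ onto the portion of $X(m)_I$ contained in it; together with $\left(\frac{2}{2m+1},1\right)\subset X(m)_I$ and $\frac{2}{2m+1},\,1\in X(m)_C$, this yields for $X(m)_I$ the same decomposition, hence the same linear equation, hence $\lambda(X(m)_I)=1$. Therefore $\lambda(X(m)_C)=\mu-\lambda(X(m)_I)=0$ and $\lambda(X(m))=1=\lambda(X(m)_I)=\sum\{|J'|:J'\text{ an }X(m)\text{-interval}\}$, as claimed.

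The one delicate point is the first step: converting the finite, explicitly disjoint description of $W^{n}(I)$ supplied by Theorem~\ref{BT} into a genuinely disjoint decomposition of the limit set $X(m)$, i.e.\ checking that the side blocks remain in pairwise disjoint ambient intervals for all $m\in N$ and that no measure is lost as $n\to\infty$. After that, the argument is the elementary fixed-point computation, valid exactly because $2m-1\neq0$. If one prefers to stay with finite approximations, Theorem~\ref{BT} instead yields, for $\ell_n=\lambda(W^{n}(I))$, the recursion $\ell_n=2\sum_{j=0}^{n-1}q^{n-j}\ell_j+\frac{2m-1}{2m+1}+\frac{4}{2m+1}q^{n}$ with $\ell_0=\lambda(I)=\frac{2m+3}{2m+1}$; since $W^{n}(I)$ decreases, $\ell_n$ converges, and passing to the limit in the recursion gives $\lim_n\ell_n=1=\lambda(X(m))$ by continuity of $\lambda$ from above.
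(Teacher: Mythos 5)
Your proof is correct, and it supplies an argument where the paper gives none: the theorem is stated without proof, the measure value being in effect imported from \cite{Banakiewicz} and the concentration on component intervals being left to follow, as in the case $m=1$, from the interval--gap correspondence used in \cite{Bielas}. What you do instead is extract from Theorem \ref{BT}, by the limit passage you rightly flag as the one delicate step, the countable disjoint decomposition of $X(m)$ into $\left[\frac{2}{2m+1},1\right]$ and two affine copies of ratio $(2m+2)^{-(k+1)}$ for each $k\geq 0$; this is precisely the content of Properties \ref{P1m} and \ref{P2m}, which the paper establishes only after the theorem. Your justification of the limit step is sound: blocks of a fixed index stay inside the fixed ambient intervals $w_2^{k}\circ w_1(I)$ and $w_{2m+1}^{k}\circ w_{2m+2}(I)$, and these, together with $\left[\frac{2}{2m+1},1\right]$, are separated by positive gaps — your endpoint formulas $w_2^{n}(0)=\frac{2}{2m+1}\bigl(1-(2m+2)^{-n}\bigr)$ and $w_{2m+1}^{n}\bigl(\frac{2m+3}{2m+1}\bigr)=1+\frac{2}{2m+1}(2m+2)^{-n}$ are correct, and the gap between consecutive ambient blocks works out to $\frac{2m-1}{2m+1}(2m+2)^{-(k+1)}>0$ for every $m\geq 1$. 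The measure then follows from the affine equation $\mu=\frac{2m-1}{2m+1}+\frac{2}{2m+1}\mu$, which is non-degenerate exactly because the central interval of known length enters the decomposition (so the usual vacuity of purely self-similar measure identities is avoided), and the same equation for $\lambda(X_I(m))$ is legitimate because each copy lies at positive distance from the rest of $X(m)$, so the similitude maps the interior of $X(m)$ onto the part of the interior contained in that copy. Your alternative finite-stage recursion for $\lambda(W^{n}(I))$ combined with continuity of Lebesgue measure from above is also correct and even sidesteps the limit decomposition for the first assertion. In short, the paper buys brevity by citation, while your route gives a self-contained proof that in passing re-derives Properties \ref{P1m}--\ref{P3m}; as a by-product, your gap value $\frac{2m-1}{2m+1}(2m+2)^{-n}$ is the actual distance between consecutive copies, which shows that the constant $\frac{1}{2m+1}$ in the paper's formula for $d_n(m)$ is accurate only when $m=1$.
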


Analogously to the previous section, we define $X_{I}(m)$ as the interior, and by $X_{C}(m)$ we denote the boundary of the set $X(m)$. Moreover, $X(m)=X_I(m) \bigsqcup X_{C}(m)$.   

\begin{newproperty}
 \label{P1m}
The interval $\left[\frac{2}{2m+1}, 1\right]$ is included in the Cantorval $X(m)$.
\end{newproperty}
\begin{proof}
According to Theorem \ref{BT}, the intersection of nested intervals $\bigcap_{n=1}^{\infty} \left[ w_2^n(0), w^{n}_{2m+1}\left(\frac{2m+3}{2m+1}\right)\right]$ is included in $X(m)$. It is easy to see that
$$\lim_{n \rightarrow \infty}{w_2^n(0)}=\frac{2}{2m+2}+\frac{2}{(2m+2)^2}+\dots+\frac{2}{(2m+2)^n} +\dots = \frac{2}{2m+1}$$
while
$$\lim_{n \rightarrow \infty}w^{n}_{2m+1}\left(\frac{2m+3}{2m+1}\right)=\lim_{n \rightarrow \infty}{\frac{2m+3}{(2m+2)^n(2m+1)}+}$$
$$+\left(\frac{2m+1}{2m+2}+\frac{2m+1}{(2m+2)^2}+\dots+\frac{2m+1}{(2m+2)^n}+\dots\right) =1.$$
As a consequence, we obtain $\left[\frac{2}{2m+1}, 1\right] \subset X(m)$.
\end{proof}

For the collection of contractors $w_i$ holds the following basic statements:
\begin{enumerate}
\item $\min w_i(X(m))<\min w_{i+1}(X(m)),  1 \leq i \leq 2m+1;$
\item $\max w_i(X(m))<\max w_{i+1}(X(m)), 1 \leq i \leq 2m+1;$
\item $\min w_i(X(m))=\frac{i}{2m+2}, \ \ \  \max w_i(X(m))=\frac{i(2m+1)+2m+3}{(2m+1)(2m+2)}, \ \ \  2 \leq i \leq 2m+1;$
\item $\min w_1(X(m))=0, \ \ \  \max w_1(X(m))=\frac{2m+3}{(2m+1)(2m+2)};$
\item $\min w_{2m+2}(X(m))=\frac{2m+3}{2m+2}, \ \ \  \max w_{2m+2}(X(m))=\frac{2m+3}{2m+1}.$
\end{enumerate}

The involution $h^{(m)}: X(m) \rightarrow X(m)$ is defined by the formula
$$h^{(m)} \mapsto h^{(m)}[x]=\frac{2m+3}{2m+1}-x$$
is the symmetry transformation on $X(m)$ with respect to the point $(2m+3)/(4m+2)$.
\begin{newproperty}
\label{P2m}
The subset $X(m) \setminus \left[\frac{2}{2m+1}, 1\right] \subset X(m)$ is a union of pairwise disjoint affine copies of $X(m)$. In particular, this union includes two isometric copies $\frac{1}{(2m+2)^n} \cdot X(m)$, for every $n>0$.
\end{newproperty}
\begin{proof}
As $X(m)$ is the attractor for the IFS $\{ w_i: i=1,2, \dots, 2m+2\}$ we get
$$X(m)=w_1(X(m)) \cup w_2(X(m)) \cup w_3(X(m)) \cup \dots \cup w_{2m+1}(X(m)) \cup w_{2m+2}(X(m)).$$

We observe that
$$w_1(X(m)) \cap w_i(X(m)) = \emptyset, \ \text{for} \ 2 \leq i \leq 2m+2;$$
$$w_i(X(m)) \cap w_{2m+2}(X(m)) = \emptyset, \ \text{for} \ 1 \leq i \leq 2m+1;$$
$$w_1(X(m)) \cap \left[\frac{2}{2m+1}, 1\right] = \emptyset, \ \ \ w_{2m+2}(X(m)) \cap \left[\frac{2}{2m+1}, 1\right] = \emptyset.$$
It follows that $C_1^{l}(m)=w_1(X(m))=\frac{1}{2m+2} \cdot X(m)$ and $C_1^{r}(m)=w_{2m+2}(X(m))=h^{(m)}\left[C_{1}^{l}(m)\right]$ are two disjoint affine copies contained in $X(m) \setminus \left[\frac{2}{2m+1}, 1\right]$. It is easy to see that the sets $w_i(X(m))$ for $3 \leq i \leq 2m$ are totally contained in $\left[\frac{2}{2m+1}, 1\right]$. On the other hand, 
$$w_{i}(X(m)) \cap \left[\frac{2}{2m+1}, 1\right] \neq \emptyset, \ \text{but} \ w_{i}(X(m)) \not \subset \left[\frac{2}{2m+1}, 1\right], \ \text{for} \ i \in \{2, 2m+1\}.$$

For the sets $w_2 (X(m))$ and $w_{2m+1} (X(m))$ we have
$$w_{2} (X(m))=w_{2}\left(\bigcup_{i=1}^{2m+2} w_i(X(m))\right), w_{2m+1} (X(m))=w_{2m+1}\left(\bigcup_{i=1}^{2m+2} w_i(X(m))\right).$$
Likewise, we observe that
$$w_2 \circ w_1(X(m)) \cap w_2 \circ w_i(X(m)) = \emptyset, \ \text{for} \ 2 \leq i \leq 2m+2;$$
$$ w_{2m+1} \circ w_i(X(m)) \cap w_{2m+1} \circ w_{2m+2}(X(m)) = \emptyset, \ \text{for} \ 1 \leq i \leq 2m+1;$$
$$w_2 \circ w_1(X(m)) \cap \left[\frac{2}{2m+1}, 1\right] = \emptyset, w_{2m+1} \circ w_{2m+2}(X(m)) \cap \left[\frac{2}{2m+1}, 1\right] = \emptyset.$$
It follows that $C_2^{l}(m)=w_2 \circ w_1 (X(m))=\frac{2}{(2m+2)} + \frac{1}{(2m+2)^2} \cdot X(m)$ and $C_2^{r}(m)=w_{2m+1} \circ w_{2m+2}(X(m))=h^{(m)}\left[C_{2}^{l}(m)\right]$ are two disjoint affine copies contained in $X(m) \setminus \left[\frac{2}{2m+1}, 1\right]$. It is easy to see that the sets $w_2 \circ w_i(X(m))$ for $3 \leq i \leq 2m+2$ and $w_{2m+1} \circ w_j(X(m))$ for $1 \leq j \leq 2m$ are totally contained in $\left[\frac{2}{2m+1}, 1\right]$. On the other hand, 

$$w_{i} \circ w_{i}(X(m)) \cap \left[\frac{2}{2m+1}, 1\right] \neq \emptyset, \ \text{but} \ w_{i} \circ w_{i}(X(m))  \not \subset  \left[\frac{2}{2m+1}, 1\right], \ \text{for} \ i \in \{2, 2m+1\}.$$
By continuing the above considerations, we can show that $C^{l}_{n}(m), C^{r}_{n}(m)$ defined by
$$C_n^{l}(m)=\underbrace{w_2 \circ \dots \circ w_2}_{n-1} \circ w_1(X(m))=\sum_{i=1}^{n-1}{\frac{2}{(2m+2)^i}} + \frac{1}{(2m+2)^n} \cdot X(m),$$
$$C_n^{r}(m)=\underbrace{w_{2m+1} \circ \dots \circ w_{2m+1}}_{n-1} \circ w_{2m+2}(X(m))=h^{(m)}\left[C_{n}^{l}(m)\right],$$
are absolutely contained in $X(m) \setminus \left[\frac{2}{2m+1}, 1\right]$, for any $n \in N$, while almost all other iterations belong to the interval $\left[\frac{2}{2m+1}, 1\right]$. The conclusion is as follows: 
$$X(m) \setminus \left[\frac{2}{2m+1}, 1\right] = \bigsqcup_{n=1}^{\infty} \left( C^{l}_{n}(m) \sqcup C^{r}_{n}(m)\right).$$
\end{proof}


\begin{newproperty}
\label{P3m}
The set $X_C(m)$ is a union of pairwise disjoint affine copies of itself with similarity ratios of $1/(2m+2)^n$, $n \in N$, namely
$$X_C(m) = \bigsqcup_{n \in N} \big(\bar{C}_{n}^{l}(m) \sqcup \bar{C}_{n}^{r}(m)\big),$$
where $\displaystyle \bar{C}^{l}_{n}(m)=\sum_{i=1}^{n-1} \frac{2}{{(2m+2)}^i} + \frac{1}{{(2m+2)}^n} \cdot X_C(m)$ and $\displaystyle \bar{C}^{r}_{n}(m)=h^{m}\left[ C_{n}^{l}(m) \right]$ are right and left affine copies with respect to the point $(2m+3)/(4m+2)$.
\end{newproperty}

\begin{newproperty}
The distance between neighbour affine copies $\bar{C}^{l}_{n}(m), \bar{C}^{r}_{n+1}(m) \subset X_C(m)$ and $\bar{C}^{r}_{n}(m), \bar{C}^{r}_{n+1}(m) \subset X_C(m)$ can be calculated by the formula
$$d_n(m)=d(\bar{C}^{l}_{n}(m), \bar{C}^{l}_{n+1}(m))=d(\bar{C}^{r}_{n}(m), \bar{C}^{r}_{n+1}(m))=\frac{1}{2m+1} \cdot \frac{1}{(2m+2)^n}.$$
\end{newproperty}

\begin{newproperty}
\label{lastpm}
The distance between symmetrical affine copies $\bar{C}^{l}_{n}(m), \bar{C}^{r}_{n}(m) \subset X_C(m)$ can be calculated by the formula
$$s_n(m)=d(\bar{C}^{l}_{n}(m), \bar{C}^{r}_{n}(m))= \frac{2m+3}{2m+1} - 2 \cdot \left( \sum_{i=1}^{n-1} \frac{2}{(2m+2)^i} + \frac{1}{(2m+2)^n} \cdot \frac{2m+3}{2m+1}\right).$$
\end{newproperty}

Properties \ref{P3m}-\ref{lastpm} are direct corollaries of Property \ref{P2m}. Finally, we conclude that $X(m)$ has in some sense a similar structure to $X$, which might be illustrated in the following picture:

\begin{figure}[h]
\center{\includegraphics[scale=0.32]{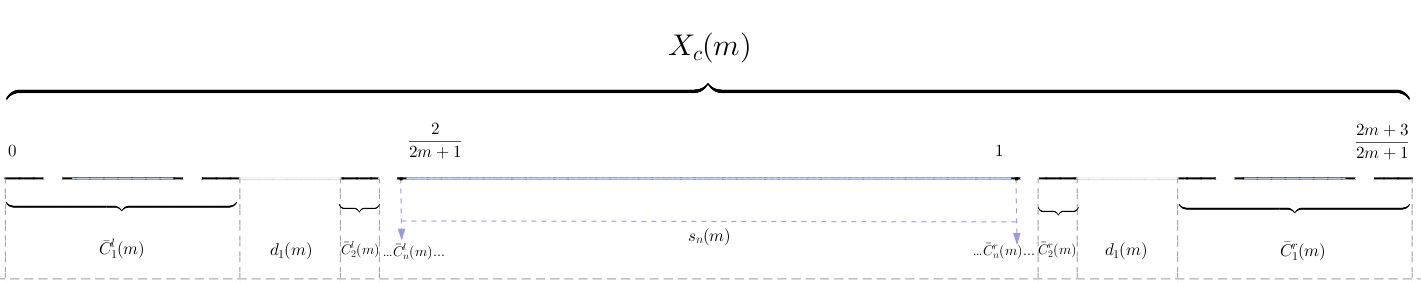}}
\caption{The set $X_C(m)$ consists of countable disjoint affine copies $\bar{C}^{l}_{n}(m), \bar{C}^{r}_{n}(m)$.}
\end{figure}

\begin{theorem}
The set $X_C(m)$ is a fractal set with the Hausdorff dimension equal to $\dim_{H}X_C(m)=\log_{2m+2}{3}$.
\end{theorem}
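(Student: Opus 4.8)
The plan is to mirror, \emph{mutatis mutandis}, the two-step argument already used for $X_C$ in Section~2. First I would invoke Property~\ref{P3m}, which presents $X_C(m)$ as the disjoint countable union $\bigsqcup_{n\in N}\big(\bar{C}^{l}_{n}(m)\sqcup\bar{C}^{r}_{n}(m)\big)$ of affine copies of itself --- two copies for each $n$, with similarity ratios $(2m+2)^{-n}$. Hence $X_C(m)$ is an NSS-set, and its $N$-self-similar dimension $\alpha_{*}$ is the root of $2\sum_{n=1}^{\infty}\big((2m+2)^{-n}\big)^{x}=1$; summing the geometric series gives $2(2m+2)^{-x}=1-(2m+2)^{-x}$, i.e. $(2m+2)^{x}=3$, so $\alpha_{*}=\log_{2m+2}3$. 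Since $3<2m+2$ for every $m\in N$, this number lies strictly in $(0,1)$, while the topological dimension of $X_C(m)$ is $0$ (it is the nowhere dense boundary of a Cantorval); thus, once $\dim_{H}X_C(m)=\alpha_{*}$ is established, $X_C(m)$ is indeed a fractal.

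It remains to show $\dim_{H}X_C(m)=\alpha_{*}$, and for this I would establish $0<H^{\alpha_{*}}(X_C(m))<\infty$. For the upper bound, iterate the decomposition of Property~\ref{P3m}: its $k$-th refinement consists of the countably many cylinders indexed by a word $(n_1,\dots,n_k)\in N^{k}$ together with an $l/r$ marker at each of the $k$ steps; such a cylinder is an affine copy of $X_C(m)$ of ratio $(2m+2)^{-(n_1+\dots+n_k)}$, hence of diameter at most $(2m+2)^{-k}|X_C(m)|$, which tends to $0$ as $k\to\infty$. Because $2\sum_{n}(2m+2)^{-n\alpha_{*}}=1$, summing over all level-$k$ cylinders gives $\sum|P|^{\alpha_{*}}=|X_C(m)|^{\alpha_{*}}$ for every $k$; taking these families as covers of vanishing mesh yields $H^{\alpha_{*}}(X_C(m))\le|X_C(m)|^{\alpha_{*}}<\infty$, so $\dim_{H}X_C(m)\le\alpha_{*}$.

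For the reverse inequality I would apply the mass distribution principle. Let $\mu$ be the natural self-similar probability measure on $X_C(m)$ that assigns to the level-$k$ cylinder indexed by $(n_1,\dots,n_k)$ the mass $\prod_{j=1}^{k}(2m+2)^{-n_j\alpha_{*}}$; the identity above makes $\mu$ a probability measure with $\mu(P)=(|P|/|X_C(m)|)^{\alpha_{*}}$ for every cylinder $P$. Given an arbitrary set $U$ of small diameter, I would choose the largest $k$ for which $U$ lies inside a single level-$(k-1)$ cylinder, and bound the number of level-$k$ cylinders that $U$ can meet using the uniform separation recorded in Properties~\ref{P3m}--\ref{lastpm}: at every scale the gap $d_n(m)=\tfrac{1}{2m+1}(2m+2)^{-n}$ between neighbouring copies is the fixed fraction $\tfrac{1}{2m+3}$ of the diameter $\tfrac{2m+3}{2m+1}(2m+2)^{-n}$ of the adjacent copy, and the symmetric gap $s_n(m)$ stays bounded away from $0$. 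This gives $\mu(U)\le C\,|U|^{\alpha_{*}}$ with $C$ independent of $U$, hence $H^{\alpha_{*}}(X_C(m))\ge C^{-1}>0$ and $\dim_{H}X_C(m)\ge\alpha_{*}$. Combining the two bounds, $\dim_{H}X_C(m)=\log_{2m+2}3$.

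The main obstacle is precisely this lower bound. Since $X_C(m)$ has infinitely many self-copies accumulating at the point $\tfrac{2}{2m+1}$ --- and, inductively, at every point obtained by iterating --- the open set condition fails, so Hutchinson's theorem cannot be quoted directly, and the Frostman-type estimate $\mu(U)\lesssim|U|^{\alpha_{*}}$ must be controlled uniformly across all scales at once. What makes it work is that the geometry is scale-invariant in the right way: the gaps between consecutive copies are a constant fraction of the corresponding copy's diameter at \emph{every} generation, exactly as for $X_C$, so the troublesome estimate near an accumulation point matches the generic scaling and the whole argument reduces to the bookkeeping already done in Section~2 and in the NSS theory of \cite{PM} and \cite{PF}; passing from $m=1$ to general $m$ changes only the arithmetic, not the structure of the proof.
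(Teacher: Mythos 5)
Your proposal is correct, and it starts exactly where the paper does: Property~\ref{P3m} gives the countable disjoint decomposition of $X_C(m)$ into two affine copies of ratio $(2m+2)^{-n}$ for each $n$, and solving $2\sum_{n\ge 1}(2m+2)^{-nx}=1$ gives $(2m+2)^x=3$, i.e. $\alpha_*=\log_{2m+2}3$, just as in the paper. Where you genuinely diverge is in how $\dim_H X_C(m)=\alpha_*$ is justified. The paper simply asserts that $0<H^{\alpha}(X_C(m))<\infty$ at $\alpha=\log_{2m+2}3$ and then reads off the dimension from the scaling identity $H^{\alpha}(X_C(m))=H^{\alpha}(X_C(m))\cdot 2\sum_n(2m+2)^{-n\alpha}$; but that identity by itself is also satisfied when $H^{\alpha}$ is $0$ or $\infty$, so the positivity and finiteness are precisely the crux (the paper's own example of $\mathbb{Q}$, whose NSS-dimension is $1$ while its Hausdorff dimension is $0$, shows the NSS-dimension need not be attained). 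Your proposal supplies exactly this missing content: the upper bound by iterating the decomposition to obtain level-$k$ cylinder covers of mesh at most $(2m+2)^{-k}|X_C(m)|$ whose $\alpha_*$-sums are constant, and the lower bound by the mass distribution principle for the natural cylinder measure, with the essential geometric input correctly identified --- the gap $d_n(m)=\tfrac{1}{2m+1}(2m+2)^{-n}$ is a fixed fraction $\tfrac{1}{2m+3}$ of the diameter of the adjacent copy at every generation, and $s_n(m)$ is bounded away from zero --- which is what compensates for the failure of the open set condition for this countable system. The only soft spot is that the Frostman-type estimate $\mu(U)\le C|U|^{\alpha_*}$ is sketched rather than carried out (one must handle sets $U$ near the accumulation points, where copies of all scales cluster, e.g. by summing the tail masses $\sum_{j\ge n}(2m+2)^{-j\alpha_*}\asymp d_n(m)^{\alpha_*}$), but the argument as outlined is sound and, if written out, would make the theorem's proof more complete than the one in the paper.
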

\begin{proof}
Due to Property \ref{P3m}, $X_C(m)$ is a NSS-set as it consists of a countable union of pairwise disjoint affine copies of itself with similarity factors of $1/(2m+2)^n, n \in N$. The NSS-dimension of $X_C$ can be found as the solution of the following equation:
\begin{equation*}
2\left( \frac{1}{2m+2}\right)^x + 2\left( \frac{1}{2m+2}\right)^{2x} + 2\left( \frac{1}{2m+2}\right)^{3x} + \dots +2\left( \frac{1}{2m+2}\right)^{nx} + \dots= 1,
\end{equation*}
from it follows
$$\frac{(2m+2)^{-x}}{1-(2m+2)^{-x}}=\frac{1}{2} ~ ~ ~ \Rightarrow ~ ~ ~ (2m+2)^{x}=3 ~ ~ ~ \Rightarrow ~ ~ ~ x=\log_{2m+2} 3.$$
Moreover, we observe that $0<H^{\alpha}(X_C(m))<\infty$ when $\alpha=\log_{2m+2}{3}$. Taking into account the property of the $\alpha$-dimensional Hausdorff measure, we get the identity $\alpha_{\ast}(X_C(m))=\dim_{H}X_{C}(m)=\log_{2m+2}{3}.$
\end{proof}

\begin{corollary}

The Cantorval $X(m)$ can be represented as the union
$X=X_{I}(m) \bigsqcup X_{C}(m)$, where $X_{I}(m)$ is an infinity union of open intervals having the Lebesgue measure equal to 1, $X_{C}(m)$ is a Cantor set with zero Lebesgue measure and fractional Hausdorff dimension $\dim_{H}X_{C}=\log_{2m+2} 3$.

\end{corollary}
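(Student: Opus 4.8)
The plan is to assemble the corollary from the material already in hand, isolating the one genuinely topological point --- that $X_C(m)$ is homeomorphic to the Cantor set --- and disposing of everything else by a measure count and by citing the preceding theorems. First I would record the trivial decomposition: $X(m)$ is compact, hence $X(m)=X_I(m)\sqcup X_C(m)$ with $X_I(m)$ its interior (open) and $X_C(m)=\partial X(m)$ its boundary (closed), the union disjoint by definition of interior and boundary of a closed set. Being an open subset of $\mathbb R$, $X_I(m)$ is a countable disjoint union of open intervals, each of which is the interior of a non-degenerate connectivity component (an $X(m)$-interval) of $X(m)$; there are infinitely many of them because by Theorem~\ref{GN-Theorem} the M-Cantorval $X(m)$ is not a finite union of closed intervals.

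For the measures I would invoke the theorem asserting that $\lambda(X(m))=1$ and that this value equals the sum of the lengths of all component intervals of $X(m)$. That sum is exactly $\lambda(X_I(m))$, so $\lambda(X_I(m))=1$, and additivity of Lebesgue measure over the disjoint union $X(m)=X_I(m)\sqcup X_C(m)$ gives $\lambda(X_C(m))=\lambda(X(m))-\lambda(X_I(m))=0$. In particular $X_C(m)$ has empty interior, and being closed it is nowhere dense.

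Next I would verify that $X_C(m)$ is a Cantor set, i.e.\ a non-empty, compact, perfect, totally disconnected metrizable space. Non-emptiness and compactness are immediate: $X_C(m)$ is a closed bounded subset of $\mathbb R$ containing the extreme points $0$ and $\tfrac{2m+3}{2m+1}$ of $X(m)$, which necessarily lie on $\partial X(m)$. Total disconnectedness follows from Property~\ref{P3m} together with the positive-gap Properties~\ref{P3m}--\ref{lastpm}: every point of $X_C(m)$ lies in a (possibly infinite) nested chain of affine copies $\bar C_n^{l}(m),\bar C_n^{r}(m)$ whose diameters $(2m+2)^{-n}\to 0$, so two distinct points must fall into two distinct copies at some stage, and those two copies are separated by a gap of length $d_n(m)$ or $s_n(m)$; hence no connected subset of $X_C(m)$ has more than one point. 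For perfectness I would argue, exactly as in the proof of Property~\ref{p4}, that $\tfrac{2}{2m+1}=\lim_n\max\bar C_n^{l}(m)$ and $1=\lim_n\min\bar C_n^{r}(m)$ are non-isolated, and that the self-similarity in Property~\ref{P3m} propagates this: the analogous limiting endpoints inside every copy are non-isolated as well, and any remaining point lies in infinitely many nested copies of diameter tending to $0$, each an infinite set, so it is accumulated by points of $X_C(m)$. With $X_C(m)$ shown to be non-empty, compact, perfect and totally disconnected, Brouwer's characterisation identifies it up to homeomorphism with the Cantor set; its Hausdorff dimension $\dim_H X_C(m)=\log_{2m+2}3$ is the content of the preceding theorem, and this completes the corollary.

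The step requiring the most care is the perfectness of $X_C(m)$. The self-similar decomposition of Property~\ref{P3m}, read literally, exhibits only one accumulation point ``from outside'' per level --- the endpoint $\tfrac{2}{2m+1}$ and its mirror $1$, which in fact must be adjoined by closedness since $\max\bar C_n^{l}(m)<\tfrac{2}{2m+1}$ for every $n$. One therefore has to carry non-isolation through the self-similar structure so as to reach every point of $X_C(m)$, in particular the points that are never a limiting endpoint at any finite stage and so lie in an infinite descending chain of copies. Everything else in the corollary is either definitional or a direct consequence of the measure identity and of the already-established Properties.
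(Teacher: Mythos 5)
Your proposal is correct, and it follows the route the paper intends: the paper states this corollary without any proof, leaving it as the immediate combination of the theorem that $\lambda(X(m))=1$ equals the total length of the component intervals (hence $\lambda(X_I(m))=1$, $\lambda(X_C(m))=0$) with the theorem $\dim_H X_C(m)=\log_{2m+2}3$, exactly as in your first two paragraphs. What you add, and the paper silently omits, is the verification that $X_C(m)$ is actually a Cantor set via Brouwer's characterisation; this is the only genuinely topological content, and your treatment of perfectness is the right one, including the observation that Property \ref{P3m}, read literally, misses the two adjoined endpoints $\tfrac{2}{2m+1}$ and $1$ (the same small inaccuracy already present in Property \ref{p4}, where the paper's own proof adjoins $2/3$ and $1$ by closedness without reconciling this with the stated disjoint union). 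One simplification worth noting: once $\lambda(X_C(m))=0$, the set is a closed nowhere dense subset of $\mathbb{R}$, and any connected subset of $\mathbb{R}$ with more than one point contains an interval; so total disconnectedness is immediate and you can drop the nested-copies argument there, which as literally phrased (``every point lies in a nested chain of copies'') would stumble on precisely those adjoined endpoints. With that trimming, your assembly is a complete and correct proof of the corollary, somewhat more careful than the paper itself.
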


\end{section}

\begin{section}{Open problems}

As a result of the paper, we demonstrated that the Cantorval $X(m)$ can be obtained as the disjoint union of open intervals $X_{I}(m)$ and the fractal set $X_{C}(m)$ with Hausdorff dimension $\dim_H X_{C}(m)=\log_{2m+2}{3}$. It is intriguing that the choice of $m$ doesn't affect the Lebesgue measure of the entire Cantorval while it impacts the Hausdorff dimension of its boundary. Moreover, we observe that $\dim_{H}X_{C}(m) \rightarrow 0$ when $m \rightarrow \infty$.

We say that a Cantorval $T$ is \emph{achievable} if there exists a convergent positive series $\sum u_n$ such that $T=E(u_n)$. Taking into account the above theorem, there naturally arise the following questions:
\begin{itemize}
\item Is it possible to represent an arbitrary achievable Cantorval $T$ as a union of open intervals $T_I$ and some Cantor set $T_C$ with zero Lebesgue measure? In other words, is the Lebesgue measure for a random achievable Cantorval concentrated only on its component intervals, as in the case of $X$?
\item Is it possible for the set $T_{C}$ to be a superfractal set (zero Lebesgue measure set with $\dim_{H} T_C = 1$) or an anomalyfractal set ($\dim_{H} T_C = 0$)?
\item Can we introduce a general technique for the calculation of the Hausdorff dimension for the boundary of an arbitrary achievable Cantorval?
\end{itemize}
\end{section}

Finally, we should note that potentially we can construct a set $T=T_I \cup T_C$ homeomorphic to the set $T^*$ (see Section 1) having a positive Lebesgue measure boundary $T_C$. However, there are no guarantees that $T$ is achievable. This brings up one more interesting question about the necessary and sufficient conditions for a set to be achievable.

\textbf{Acknowledgement.} The first author was supported by a grant from the Simon Foundation. The second author is working at the University of St. Andrews in the framework of the Isaac Newton Institute solidarity programme and has additional support from the London Mathematical Society.

\end{document}